\renewcommand{\leq}{\leqslant}
\renewcommand{\geq}{\geqslant}
\DeclareFixedFont{\beaupetit}{T1}{ftp}{b}{n}{2cm}
\newtheorem{theorem}{Theorem}[]
\newtheorem{proposition}[]{Proposition}
\newtheorem{lemma}[]{Lemma}
\theoremstyle{definition}
\title{\textsc{Parking on supercritical geometric Bienaym\'e--Galton--Watson trees}}
\author{
Linxiao  \textsc{Chen}\thanks{Université Sorbonne Paris-Nord.\hfill  \href{mailto:linxiao.chen@math.univ-paris13.fr}{\texttt{linxiao.chen@math.univ-paris13.fr}}}\qquad\&\qquad
Alice \textsc{Contat}\thanks{Université Sorbonne Paris-Nord \& FSMP.\hfill  \href{mailto:alice.contat@math.cnrs.fr}{\texttt{alice.contat@math.cnrs.fr}}}
}
\date{}
\begin{document}
\maketitle 

\begin{abstract}
Consider a supercritical Bienaym\'e--Galton--Watson tree $ \mathcal{T}$ with geometric offspring distribution. Each vertex of this tree represents a parking spot which can accommodate at most one car. On the top of this tree, we add $(A_u : u \in \mathcal{T})$ i.i.d.\ non negative integers sampled according to a given law $ \mu$, which are the car arrivals on $ \mathcal{T}$. Each car tries to park on its arriving vertex and if the spot is already occupied, it drives towards the root and takes the first available spot. If no spot is found, then it exits the tree without parking.
 In this paper, we provide a criterion to determine the phase of the parking process (subcritical, critical, or supercritical) depending on the generating function of~$ \mu$. \end{abstract}
\section{Introduction}	
The parking process was introduced by Konheim \& Weiss \cite{konheim1966occupancy} in the case of an oriented line. Recently, its generalization on \emph{rooted trees} was initiated by Lackner \& Panholzer \cite{LaP16}. Since then, the phase transition of this model received much attention and was proved and located on \emph{critical} Bienaym\'e--Galton--Watson trees in an increasing level of generality \cite{GP19,curien2022phase,contat2020sharpness,contat2023parking}.

In the case of \emph{supercritical} Bienaym\'e--Galton--Watson trees, the only model for  which the threshold of the transition is explicitly known is a deterministic tree, namely that of the infinite binary tree \cite{aldous2022parking}. The existence of the phase transition is however proved in a more general context \cite{GP19,bahl2021parking} and the model is closely related to Derrida--Retaux model \cite{chen2021derrida}. In this paper, we locate and study the phase transition of the parking process on supercritical Bienaym\'e--Galton--Watson trees with a geometric offspring distribution using essentially enumeration results provided by Chen \cite{chen2021enumeration}.

\paragraph{Parking on trees.}  
We focus here on the case where the underlying tree is a supercritical Bienaym\'e--Galton--Watson tree $ \mathcal{T}$ with a geometric offspring distribution i.e.\ a tree which starts from one individual and where each individual reproduces independently according to the offspring distribution  
$$ \nu := \sum_{k \geq 0} q^k(1-q) \delta_k$$ with parameter $ 1/2<q < 1$, so that $ \nu$ has mean $ q/(1-q)> 1$. We denote by $| \mathcal{T}|$ its size (number of vertices). It is well-known that this tree has a positive probability to be infinite since $q > 1/2$, which is not the case when $ q \leq 1/2$. On the top of this tree, we add $ (A_u : u \in \mathcal{T})$ i.i.d.\ non negative integers with law $ \mu$, which represent the car arrivals on $ \mathcal{T}$. Each car tries to park at its arriving vertex, but if it is already occupied, it drives towards the root until it encounters an empty vertex and parks there. If it arrives at the root without parking, then it exits the root and contributes to the \emph{flux} of outgoing cars. To avoid cases where all cars park on their arrival node, we assume $ \mu(\{0,1\}) <1$. When the tree is finite, the final configuration does not depend on the order in which cars are parked. When the tree is infinite, we circumvent the possible issue by parking the cars layers by layers, see for example \cite[Section 2.1]{aldous2022parking} or the proof of Lemma \ref{lem:finitecluster}.
We introduce
 $$X := \mbox{ the number of cars visiting the root of } \mathcal{T}, $$
so that the flux of outgoing cars equals $ (X-1)_+ = \max (X-1,0)$. 
In \cite[Theorem 3.4]{GP19}, Goldschmidt and Przykucki proved that there are two possible regimes for the parking process on the supercritical Bienaym\'e--Galton--Watson trees $ \mathcal{T}$ depending on the two laws $\mu$ and $\nu$: 
\begin{itemize}
\item Either $ \mathbb{E}[X]< \infty$ \emph{(subcritical regime)}, 
\item Or  $ X = \infty$ as soon as $\mathcal{T}$ is infinite \emph{(supercritical regime)}.
\end{itemize}

The goal of this paper is to provide a criterion to determine if the parking process is subcritical or supercritical depending on the parameter $q$ and the car arrivals distribution $ \mu$.
To state our theorem, we need to introduce $G$ the generating function of the car arrival law $\mu$
$$ G(t) := \sum_{k \geq 0} \mu_k x^k.$$

\begin{theorem} \label{thm:locthereshold}
Suppose that there exists $t_c$ such that 
$$t_c := \inf \{ t> 0, (G(t) - t G'(t))^2 = 2 t^2 G(t) G''(t)\}.$$ 
Then the parking process is subcritical if and only if 
$$ t_c > 1 \quad\mbox{ and } \quad  \frac{t_c G(t_c)}{\varphi(t_c)^2}  \leq q (1-q) ,$$
where $ \varphi(y) = (y+1) G(y) - y(y-1)G'(y)$.
\end{theorem}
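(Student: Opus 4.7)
The plan is to translate the parking recursion into an algebraic equation for the generating function $F(x) := \mathbb{E}[x^X]$ (with the convention $x^\infty = 0$ for $x < 1$) and apply the classical \emph{quadratic method}. Conditioning on the children of the root, the branching property gives the distributional identity $X \stackrel{d}{=} A + \sum_{i=1}^N (X_i - 1)_+$ with $A\sim\mu$, $N\sim\nu$ and $(X_i)$ i.i.d.\ copies of $X$, all independent. Setting $a := \mathbb{P}(X = 0)$, a direct computation gives $\mathbb{E}[x^{(X-1)_+}] = a + x^{-1}(F(x) - a)$, and plugging this into the geometric generating function $s \mapsto (1-q)/(1-qs)$ leads, after clearing denominators, to the quadratic
$$q\,F(x)^2 \;-\; F(x)\bigl[x - qa(x-1)\bigr] \;+\; x(1-q)G(x) \;=\; 0. \qquad (\star)$$
Its discriminant is $\Delta(x) = [x - qa(x-1)]^2 - 4q(1-q)\,x\,G(x)$, and the branch selected by $F(0) = a$ is $F_+(x) = \bigl([x - qa(x-1)] + \sqrt{\Delta(x)}\bigr)/(2q)$. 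For $F_+$ to represent a genuine probability generating function, a smooth-closing analysis of the quadratic method forces $\Delta$ to admit a \emph{double} zero at some $x_c > 0$, i.e., $\Delta(x_c) = 0 = \Delta'(x_c)$. Eliminating the unknown $a$ between these two equations and simplifying using the combination $\varphi(x) = (x+1)G(x) - x(x-1)G'(x)$ that appears naturally in the algebra yields the single compact relation
$$q(1-q)\,\varphi(x_c)^2 \;=\; x_c\,G(x_c).$$

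To conclude, introduce $h(x) := xG(x)/\varphi(x)^2$, so that the condition above reads $h(x_c) = q(1-q)$. A direct differentiation gives
$$h'(x) \;=\; \frac{(x-1)\bigl[\,2x^2 G(x) G''(x) - (G(x) - xG'(x))^2\,\bigr]}{\varphi(x)^3},$$
so the critical points of $h$ other than $x = 1$ are exactly the solutions of $(G - xG')^2 = 2x^2 G G''$, the smallest of which is $t_c$. Since $\varphi(1) = 2$ and $G(1) = 1$, one has $h(1) = 1/4$ and $h'(1) = 0$; a short analysis near $x = 1$ shows that $1$ is a local maximum, so on $(1, t_c)$ the function $h$ decreases from $1/4$ to $h(t_c)$. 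Because $q > 1/2$ forces $q(1-q) < 1/4$, the equation $h(x) = q(1-q)$ admits a root $x_c > 1$ (the relevant singularity of $F_+$) precisely when $t_c > 1$ and $h(t_c) \leq q(1-q)$, which is the stated criterion.

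The algebraic steps above are essentially mechanical; the genuine difficulty lies in rigorously matching the Goldschmidt--Przykucki probabilistic dichotomy to this algebraic picture. In the direction ``subcritical $\Rightarrow$ condition'', one must verify that $F$ is analytic on $[0,1]$ with $F(1)=1$ and that $(\star)$ truly forces the double-root structure rather than being merely compatible with it. In the converse direction, one must produce a genuine parking distribution from the analytic solution, typically by a finite-volume approximation (e.g.\ truncating $\mathcal{T}$ at finite height), solving the analogous quadratics $(\star_n)$, and passing to the limit through the branch switch between $F_+(1) = 1$ (subcritical) and $F_-(1) = (1-q)/q$, which is exactly the extinction probability of the supercritical geometric Bienaym\'e--Galton--Watson tree. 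Handling this branch switch and validating that the limit is the correct generating function is where the enumeration results of Chen~\cite{chen2021enumeration} enter, providing combinatorial identifications of the relevant algebraic series that bridge the singularity analysis of $(\star)$ and the probabilistic classification.
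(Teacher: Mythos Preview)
Your route---writing the quadratic $(\star)$ directly for the generating function of $X$ via the RDE $X \stackrel{d}{=} A + \sum_{i=1}^N (X_i-1)_+$---is genuinely different from the paper's. The paper never works with $(\star)$; instead it decomposes the parked configuration into \emph{fully parked clusters} (Lemma~3), expresses $\mathbb{P}(X=k+1)$ as a coefficient of the bivariate fully-parked-tree series $F(x_\circ,y)$, and then uses Chen's kernel-method parametrization of that series (Proposition~1) to turn the normalization $\sum_k \mathbb{P}(X=k)=1$ into an explicit equation in the parameter $Y_\circ$. Your algebra is correct: the quadratic, the discriminant, the identity $h'(x)=\frac{(x-1)[2x^2GG''-(G-xG')^2]}{\varphi^3}$, and the reduction of the double-root system to $h(x_c)=q(1-q)$ all check out, and in fact your $x_c$ coincides with the paper's $Y_\circ$, so the two criteria agree.

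The genuine gap---which you flag yourself---is the sentence ``a smooth-closing analysis of the quadratic method forces $\Delta$ to admit a double zero''. This is not the standard catalytic-variable situation: your unknown $a=F(0)$ is a \emph{constant}, not a power series in a main enumeration variable, so the Bousquet-M\'elou--Jehanne framework does not apply as stated. For \emph{every} $a\in(0,1)$ the branch $F_+$ is analytic near $0$ with $F_+(0)=a$ and $F_+(1)=1$, so neither analyticity on $[0,1]$ nor $F(1)=1$ singles out the physical $a$; what you need is precisely that the coefficients are nonnegative, and you give no mechanism linking nonnegativity to a double root of $\Delta$. The paper's fully-parked-tree decomposition is exactly what supplies this missing link: it \emph{identifies} the coefficients of $F$ combinatorially, which both proves nonnegativity and produces the extra equation (their~(5), equivalent to your $h(x_c)=q(1-q)$) pinning down $a=p_\circ$. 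The converse (building a genuine RDE solution from a root of~(5)) is handled in the paper by a direct verification using Tutte's equation, not by the finite-height limiting scheme you sketch. So your outline is a plausible and more streamlined strategy, but as written the heart of the argument is a heuristic; the paper's combinatorial detour is what makes it rigorous.
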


Let us make a couple of remarks on this theorem. First to determine if $ t_c > 1$, it suffices to check if $(G(1) - 1 G'(1))^2 > 2 \cdot1^2 G(1) G''(1)$, which is equivalent to $ 2 \sigma^2 + m^2 < 1$ where $m$ and $ \sigma^2$ are respectively the mean and the variance of $ \mu$. Moreover,  the second inequality is always satisfied for $ q = 1/2$. In particular, when $q \searrow 1/2$, we recover the same criterion for the phase transition as that proved by Curien and Hénard in \cite[Theorem 1]{curien2022phase}. However, the nature of the phase transition is different when $q=1/2$, since the tree $  \mathcal{T}$ is almost surely finite. In this case, the flux of outgoing cars has an infinite expectation in the supercritical and critical phase of the parking process, whereas this expectation is finite in the subcritical phase, see also  \cite{bahl2021parking}.

Moreover, when $ t_c \leq 1$, the parking process is always supercritical. As an example, if $ \mu$ is a car arrival distribution such that the radius of convergence of $G$ equals $1$, then for all $ \alpha \in (0,1]$, the parking process with car arrival distribution $ \mu_ \alpha = (1- \alpha) \delta_0 + \alpha \mu$ is supercritical.  In other words, the phase transition is trivial for the stochastically increasing family of car arrival distributions $( \mu_ \alpha : \alpha \in [0,1])$ since it occurs at $ \alpha_c := 0$: if $\alpha = 0$, the parking process is subcritical (no cars arriving) and if $\alpha \geq 0$, then the parking process is supercritical.

When $ t_c$ does not exist, we are in the \emph{dense phase} in the classification of Chen \cite{chen2021enumeration}. Our criterion remains valid by verifying the two inequalities for $ \widetilde{t}_c$ the radius of convergence of $G$. %

The proof follows essentially the same ideas as in \cite{aldous2022parking} and rely on a combinatorial decomposition of the final configuration of parking into clusters of parked cars and on the enumeration of plane \emph{fully parked trees} by Chen \cite{chen2021enumeration}. We use the fact that the flux of outgoing cars is almost surely finite in the subcritical regime to obtain an equation involving the generating function of fully parked trees which characterizes the subcritical regime, see Proposition \ref{prop:subcriticF}.

\paragraph{Examples.}  
Let us give some examples of explicit laws where the criterion to determine the threshold of the parking process is explicit. To make this criterion more intelligible, it is more convenient to fix the car arrival law and determine the range of parameter $q$ for which the parking process is subcritical. We will come back to this examples in Section \ref{sec:examples}. \medskip \\ 
\textit{Binary arrivals.} The simplest example of non-trivial car arrival distribution is the binary distribution $ \mu_ \alpha$: each vertex receives independently either two cars with probability $ \alpha/2$ or no car with probability $ 1 - \alpha/2$  for some parameter $ \alpha \in [0,1]$.  In this case, the generating function of  $ \mu_ \alpha$ is $ G(t) := (1- \alpha/2) + \alpha/2 t^2$. Then, the parking process is subcritical if and only if 
$$  \alpha \leq 2 - \sqrt{3} \quad \mbox{ and } \quad q \leq \frac{1}{2} \left( 1 + \sqrt{1 - \frac{6 \sqrt{2 \sqrt{3}+3}}{ \sqrt{ \alpha (2- \alpha)} \left( 3 + \sqrt{ 2 \sqrt{3}- 3} \sqrt{ \frac{2- \alpha}{ \alpha}}\right)^2}}\right). $$\medskip \\
\textit{Geometric arrivals.} If $ \mu_ \alpha = \sum_{k \geq 0} p^k (1-p) \delta_k$ is a geometric distribution with mean $ \alpha = p/(1-p)$, then the parking process is subcritical if and only if  
$$  \alpha \leq \frac{1}{3}  \quad \mbox{ and } \quad  q \leq \frac{1}{2} \left( 1 + \frac{(1-3 \alpha )^{3/2}}{1 + 9 \alpha}\right). $$ \medskip \\
\textit{Poisson arrivals.} Lastly, if $ \mu_ \alpha$ is Poisson with mean $ \alpha$, then the parking process is subcritical if and only if 
$$ \alpha \leq \sqrt{2} - 1  \quad \mbox{ and } q \leq \frac{1}{2} \left( 1+ \sqrt{ 1 - \frac{ 2( \sqrt{2}- 1)  \alpha \mathrm{e}^{ \alpha -( \sqrt{2}- 1)}}{ ( \alpha + 3  - 2 \sqrt{2})^2}}\right).$$
\textit{Non generic dilute distributions.} Another class of examples is when the car arrivals distribution is non generic. More precisely, let us take the generating function $G$ of the form 
$$ G(t) = P(t) + C \left(1- \frac{t}{ \rho} \right)^ \alpha.$$
where $P$ is a polynomial, and we fix  a negative constant $C < 0$ ,  a radius of convergence $ \rho > 1$ and an exponent $\alpha \in (2,3)$. We assume that $G$ is the generating function of a critical law, in the sense that $ t_c = \rho$ and the parameter $q$ of the tree $ \mathcal{T}$ is such that the inequality on the right in Theorem \ref{thm:locthereshold} is an equality. 
Then, the generating function of $X$ the number of cars visiting the root has a radius of convergence $ t_c$. In particular, the tail of the law of $X$ decays exponentially fast. However, this is not the case for the law of the size $ | \mathcal{C} ( \varnothing)|$ of the cluster of the root. For example, using the asymptotic provided by Chen in  \cite[Theorem 1]{chen2021enumeration}, we can show that conditionally on $X=1$, the probability that $ | \mathcal{C} ( \varnothing |)$ has size $n$ decays polynomially:
$$ \mathbb{P} \left( | \mathcal{C} ( \varnothing)| = n | X=1 \right)  \underset{n \to \infty}{\sim} \mathrm{cst} \cdot n^{\frac{- 2 \alpha +1}{ \alpha-1}}, $$
for some constant $\mathrm{cst}> 0$ that depends on $ \alpha$, see Section \ref{sec:examples}. Note that the exponent  $(- 2 \alpha +1)/ ( \alpha-1)$ is increasing from  $ -3$ to $ -5/2$ for $ \alpha \in (2,3)$.

\paragraph{Outline of the paper.} In Section \ref{sec:trans}, we show the dichotomy between the two regimes for the parking process, as well as our main decomposition which allows us to characterizes these two regimes using the generating function of \emph{fully parked trees}. Section \ref{sec:proba} is devoted to the proof of our main Theorem \ref{thm:locthereshold}. We also detail in this section some examples to which  our theorem applies and the critical exponents which appear in this examples. 

\paragraph{Acknowledgments.} We thank Nicolas Curien for stimulating discussions. The second author acknowledges partial support from SuPerGRandMa, the ERC CoG 101087572.

\section{Phase transition}\label{sec:trans}
In this section, we prove some ``probabilistic" properties of the parking process that characterize the two different regimes of this model. 
\subsection{Two regimes}
We start by proving the dichotomy between the two different regimes of the parking process. This distinction already appears in  \cite[Theorem 3.4]{GP19} but in our case, we show here a slightly more narrow property to characterize the subcritical regime. 
\begin{lemma}[Dichotomy subcritical/supercritical regime]\label{lem:dicho} We have the following dichotomy : \\
\textbf{Subcritical regime.}  Either $ \left( \frac{q}{1-q}\right)^n \mathbb{P} \left( X \geq n \right)$ is bounded,  \\
\textbf{Supercritical regime.}  Or $ X = \infty$ as long as $ \mathcal{T}$ is infinite. 
\end{lemma}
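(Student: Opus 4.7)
My plan is to build on the Goldschmidt--Przykucki dichotomy \cite{GP19} recalled in the introduction, which already guarantees that either $\E[X] < \infty$ or $X = \infty$ almost surely on $\{|\mathcal{T}| = \infty\}$. It thus remains to upgrade the first alternative into the tail bound $(q/(1-q))^n \P(X \geq n) = O(1)$.

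Assuming $\E[X] < \infty$, I will study the PGF $\phi(s) := \E[s^X]$ via its functional equation. The recursion $X_\varnothing = A_\varnothing + \sum_{u \sim \varnothing} (X_u - 1)_+$ combined with $K_\varnothing \sim \mathrm{Geom}(1-q)$ yields
\[
\phi(s)\bigl(1 - q H(s)\bigr) = (1-q)\, G(s), \qquad H(s) := \phi(0) + \frac{\phi(s) - \phi(0)}{s},
\]
and eliminating $H$ leads to the quadratic identity
\[
q \phi(s)^2 - \bigl(s - q(s-1)\phi(0)\bigr) \phi(s) + s(1-q)\, G(s) = 0.
\]
Let $\Delta(s)$ denote its discriminant and set $s_0 := q/(1-q)$. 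I plan to establish that $\Delta(s) \geq 0$ throughout $[1, s_0]$, so that the algebraic branch of $\phi$ selected by the boundary condition $\phi(1) = 1$ extends analytically up to $s_0$. This will give $\phi(s_0) < \infty$, whence the Markov-type bound
\[
\P(X \geq n) \;\leq\; \phi(s_0)\, s_0^{-n} \;=\; \phi(s_0) \left(\frac{1-q}{q}\right)^n
\]
concludes the proof.

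The main obstacle will be the positivity of $\Delta$ on $[1, s_0]$ under the sole hypothesis $\E[X] < \infty$. One checks directly that $\Delta(1) = (2q-1)^2 > 0$, and at the endpoint $s_0$ an explicit computation reduces $\Delta(s_0) \geq 0$ to the inequality
\[
\bigl(1 - (2q-1)\phi(0)\bigr)^2 \;\geq\; 4(1-q)^2\, G(s_0).
\]
Obtaining this endpoint inequality and then propagating positivity to the intermediate $s \in (1, s_0)$ will rely on the same algebraic structure that underlies Theorem~\ref{thm:locthereshold}: an interior zero of $\Delta$ would produce a square-root branch point of $\phi$ inconsistent with the probabilistic recursion, which is in turn ruled out by $\E[X] < \infty$. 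Once $\Delta \geq 0$ is secured on the whole interval, the rest is routine bookkeeping.
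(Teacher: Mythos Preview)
Your plan has a genuine gap at exactly the point you flag as ``the main obstacle'', and the sketch you offer does not close it.

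The paper's proof is entirely different and much shorter. It does not invoke \cite{GP19} and performs no generating-function analysis. Instead it argues directly: assume $(q/(1-q))^n\,\P(X\ge n)$ is unbounded. Since $\nu$ has finite variance, the martingale $(q/(1-q))^{-n}Z_n$ converges in $L^2$, and its limit is positive exactly on $\{|\mathcal T|=\infty\}$. Conditioning on $Z_n\ge\varepsilon(q/(1-q))^n$, the subtrees hanging at level $n$ contain $Z_n$ independent copies of $X$; the probability that none of them exceeds $n+k$ is at most $\exp\bigl(-\varepsilon(q/(1-q))^n\P(X>n+k)\bigr)$, which vanishes along a subsequence by the unboundedness hypothesis. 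Hence $\P(X\ge k)\ge\P(|\mathcal T|=\infty)$ for every $k$, so $X=\infty$ on survival. No analytic continuation is needed.

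In your route, everything hinges on showing that $\E[X]<\infty$ forces $\phi(s_0)<\infty$ with $s_0=q/(1-q)$, equivalently $\Delta\ge 0$ on $[1,s_0]$. But finite expectation only controls $\phi$ in a neighbourhood of $s=1$ and says nothing about its radius of convergence. Your quadratic identity holds coefficient-wise as a formal power series, so it is perfectly compatible with $\phi$ having a square-root branch point at some $s_*\in(1,s_0)$ where $\Delta(s_*)=0$; there is nothing ``inconsistent with the probabilistic recursion'' about that. Concretely, if the radius of convergence of $G$ lies in $(1,s_0)$ then $X\ge A_\varnothing$ already forces the radius of $\phi$ to be strictly below $s_0$, and nothing in the hypothesis $\E[X]<\infty$ excludes this a priori --- the remark in the paper ruling it out is a \emph{consequence} of the present lemma, not an input to it. Likewise, your endpoint inequality $\bigl(1-(2q-1)\phi(0)\bigr)^2\ge 4(1-q)^2G(s_0)$ is equivalent to what you are trying to prove, not a step towards it. Finally, invoking ``the same algebraic structure that underlies Theorem~\ref{thm:locthereshold}'' is at best a promissory note: making it precise would require the cluster decomposition (Lemmas~\ref{lem:finitecluster}--\ref{lem:xboltzmann}) and the full kernel-method analysis of $F$, i.e.\ essentially the rest of the paper, to establish a lemma that the paper proves in half a page by a branching argument.
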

Notice in particular that the number of cars visiting the root is almost surely finite and has a finite expectation in the subcritical regime. 

\proof Let us denote by $Z_n$ the number of vertices at level $n$ (at distance $n$ from the root) in the tree $ \mathcal{T}$. Since the offspring distribution $ \nu$ of $ \mathcal{T}$ has mean $ q /(1-q)$ and has bounded variance, then  $ (q /(1-q))^{-n}Z_n$ is a positive martingale which is bounded in $L^2$. Thus, it converges is $L^2$ and the probability that the limit equals $0$ is the same as the probability that the tree $ \mathcal{T}$ is finite. As a consequence, we have

$$\lim_{ \varepsilon \to 0} \lim_{n \to \infty}  \mathbb{P} \left( Z_n \geq \varepsilon  \left( \frac{q}{1-q}\right)^n \right) = \mathbb{P} \left( |\mathcal{T} | = \infty\right).  $$ 
Assume now  that the sequence $(q /(1-q))^n \mathbb{P} \left( X \geq n \right)$ is not bounded. Then this is also the case for the sequence $(q /(1-q))^n \mathbb{P} \left( X \geq n +k \right)$ for any integer $k$. 
Then on each of the $ Z_n$ vertices at level $n$, independent  copies of Bienaym\'e--Galton--Watson trees are attached and for each vertex $u$ at height $n$, we can consider $ X(u)$ the random number of cars visiting the vertex $u$. We obtain the upper bound 

\begin{eqnarray*} \mathbb{P} \left( \bigcap_{u \text{ at height } n} \left\{ X(u) \leq n + k \right\}  \mbox{ and } Z_n \geq \varepsilon \left( \frac{q}{1-q}\right)^n\right)  &\leq& (1- \mathbb{P} \left( X > n+k\right))^{ \varepsilon \left( \frac{ q}{1-q}\right)^n} \\
 &\leq&  \exp\left( - \varepsilon\left( \frac{q}{1-q}\right)^n \mathbb{P} \left( X > n+k\right) \right). 
\end{eqnarray*}
and the right-hand side goes to $0$ along a subsequence. But on the complement of the event on the left-hand side, either $ X \geq k$ or $  Z_n \leq \varepsilon  ({q}/{(1-q)})^n$. Thus, with probability at least $ \mathbb{P} \left( |\mathcal{T} | = \infty\right)$, we have $ X \geq k$. Since $k$ is arbitrary, this implies that $X = \infty$ with probability at least $ \mathbb{P} \left( |\mathcal{T} | = \infty\right)$. Moreover, the number of cars visiting the root can not be infinite when the tree is infinite, thus  $X = \infty$ with probability at most $ \mathbb{P} \left( |\mathcal{T} | = \infty\right)$, which concludes the proof. 

 \endproof

Note that $X$ the number of cars visiting the root is at least the number of cars arriving at the root. In particular, if the radius of convergence of $G$ the generating function of the car arrivals is strictly smaller than $ q/(1-q)$, then it is also the case for the generating function of the variable $X$, which implies that the parking process is supercritical. 

We are now interested in the possible sizes of the clusters that we can observe depending on the regime. We call \emph{black} the clusters of parked vertices and \emph{white} the clusters of empty vertices. 
We now prove that all black clusters are finite in the subcritical regime (which is not true for all white clusters). This is the analog of \cite[Proposition 1]{aldous2022parking} in our case and the proof essentially follows the same line. We adapt it in our case to keep our paper  self-contained. 
\begin{lemma}\label{lem:finitecluster} In the subcritical regime, there is no infinite black cluster.
\end{lemma}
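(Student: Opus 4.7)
The plan is to argue by contradiction. Suppose that with positive probability there exists an infinite black cluster somewhere in $\mathcal{T}$. Any infinite cluster must extend infinitely far from the root (the tree grows downward), so it contains at least one vertex whose own subtree-cluster is infinite. By the branching property of $\mathcal{T}$—each subtree rooted at a child of the root is itself distributed as $\mathcal{T}$ and carries an independent copy of the i.i.d.\ arrivals—it is enough to treat the case where the black cluster $C$ of the root itself is infinite with positive probability.

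The core step is a car-conservation identity inside $C$. Writing $C_k$ for the cluster vertices at distance $k$ from the root and $C_{\leq n} = \bigcup_{k\leq n} C_k$, every vertex of $C_{\leq n}$ is parked, while any child $c$ of a vertex of $C_{\leq n}$ that does \emph{not} belong to $C$ is unoccupied, so $X_c = 0$ and no car flows up through $c$. Counting cars entering and leaving $C_{\leq n}$ then gives, for every $n \geq 0$,
\[
\sum_{v \in C_{\leq n}} A_v \;+\; \sum_{c \in C_{n+1}} (X_c - 1) \;=\; |C_{\leq n}| \;+\; (X-1).
\]
Since $X_c \geq 1$ for $c \in C_{n+1} \subseteq C$, dropping the second sum yields $X - 1 \geq S_n := \sum_{v \in C_{\leq n}}(A_v - 1)$. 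Lemma \ref{lem:dicho} ensures $X < \infty$ almost surely in the subcritical regime, so on the event $\{|C|=\infty\}$ the sequence $(S_n)$ is bounded above by the finite random variable $X-1$.

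It remains to derive a contradiction from this uniform bound on $S_n$. The plan, following \cite[Proposition 1]{aldous2022parking}, is to explore the cluster $C$ in such a way that at each newly revealed cluster vertex $v$ either the conditional law of $A_v$ remains equal to $\mu$, in which case the assumption $\mu(\{0,1\})<1$ forces, independently of the past, a positive probability of contributing at least $1$ to $S_n$; or the very fact that $v \in C$ forces $A_v \geq 1$ and hence $A_v - 1 \geq 0$. Summing along infinitely many cluster vertices then makes $S_n$ diverge to $+\infty$, contradicting its boundedness. The main obstacle will be to design such an exploration: at the moment $v$ is revealed, the information previously exposed must be enough to decouple $A_v$ from the event $\{v \in C\}$, which is delicate because the cluster structure depends on the arrivals through the whole upward flux cascade. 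A natural route is to reveal the subtree below $v$ (and in particular the flux $Y_v$ coming up to $v$) before exposing $A_v$, so that either $Y_v \geq 1$ places us in the first case, or $Y_v = 0$ forces the second one.
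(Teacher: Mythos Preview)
Your conservation identity and the bound $S_n \le X-1$ are correct, and the reduction to the root cluster is fine. The genuine gap is the last step: you do not actually prove that $S_n \to +\infty$ on $\{|C|=\infty\}$, and the exploration you sketch cannot deliver it. First, the route ``reveal the subtree below $v$ before exposing $A_v$'' is circular once you descend into that subtree: having revealed everything below $v$, the arrival $A_w$ at any descendant $w$ you later add to the exploration is already exposed, so your ``fresh $\mu$'' case never recurs. Second, even granting at each cluster vertex the dichotomy you state (either $A_v$ is fresh $\mu$, or $A_v\ge 1$ is forced), this does \emph{not} imply $S_n\to+\infty$: in the fresh case $A_v-1$ can be $-1$, so $S_n$ behaves like a random walk with increments of both signs and possibly non-positive drift. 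Concretely, an infinite ray with $A_v=1$ everywhere and no overflow from side branches yields $S_n\equiv 0$, so bounded $S_n$ is not in itself a contradiction.

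The paper closes the argument differently. It works with the layer truncations $[\mathcal{T}]_n$ and builds stopping times $\theta_1<\theta_2<\cdots$: at each $\theta_i$ there is a vertex $v_i$ at depth $\theta_i$ on a fully occupied path to the root, and its children sit at level $\theta_i+1$, hence are \emph{not yet revealed}. Because that path is already full and $X(\varnothing)$ has reached its final value $p$, any child $c$ of $v_i$ with $A_c\ge 2$ would send an extra car to the root at step $\theta_i+1$, which is impossible. So the (conditionally fresh, i.i.d.) arrivals at all children of $v_i$ are constrained to lie in $\{0,1\}$; for a geometric number of children this has probability $(1-q)/(1-q(\mu_0+\mu_1))<1$, and multiplying over $i$ drives $\mathbb{P}(\mathcal{E})$ to $0$. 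The crucial difference with your plan is that the paper forces \emph{fresh} variables into $\{0,1\}$ (a nontrivial constraint at each step, conditionally independent of the past), instead of trying to make $A_v-1\ge 0$ along the cluster, which is where your argument stalls.
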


\proof This proof is mostly an adaptation of the proof of \cite[Proposition 1]{aldous2022parking}. As mentioned in \cite[Section 2.1]{aldous2022parking}, we park the lowest cars first to avoid Abelianity issues when the tree $ \mathcal{T}$ is infinite. More precisely, for each $n \geq 0$, we denote by $ [ \mathcal{T}]_n$  the finite tree obtained by cutting $ \mathcal{T}$ above height $n$, so that $ [ \mathcal{T}]_n$ is made of all vertices of $   \mathcal{T}$ of height smaller or equal than $n$. Then, parking the cars arriving on $ [ \mathcal{T}]_n$, we can define the variables 
$$ X_n (u) \quad u \in  [ \mathcal{T}]_n $$
denoting the number of cars visiting the vertex $u \in  [ \mathcal{T}]_n $ in the final configuration of parking in $ [ \mathcal{T}]_n $. Since for all $u \in  [ \mathcal{T}]_n $, the sequence $ (X_n (u) : n \geq 0)$ is non-decreasing, we can define 
$$ X (u) := \lim_{n \to \infty} X_n(u)$$ which represents the number of cars visiting $u \in \mathcal{T}$ even when $ \mathcal{T}$ is infinite.  

Now the parking rules are clear even on infinite trees, we assume the tree $ \mathcal{T}$ is infinite (otherwise they can not be infinite clusters) and the law $ \mu $ is subcritical. Thus, all variables $ (X(u) : u \in \mathcal{T})$ are almost surely finite and we denote by $ \mathcal{C} ( \varnothing)$ the cluster of the root of $ \mathcal{T}$. Let us fix $ p \geq 0$ and consider $ \mathcal{E} = \{ X ( \varnothing ) = p \mbox{ and } \mathcal{C} ( \varnothing ) \mbox{ is infinite}\}$. We can construct a sequence of stopping times $ \theta_1 < \theta_1 < \dots$ such that $ \theta_1 := \inf \{ n \geq 0 : X_n ( \varnothing) = p\}$ and for all $ i \geq 1$, we define sequentially $ \theta_{i+1} := \infty \{ n > \theta_i :  \varnothing \leftrightarrow \partial [ \mathcal{T}]_n \}$ meaning the $ \theta_{i+1}$ is the first $n > \theta_i$ such that there exists a path connecting the root $ \varnothing$ and the level $n$ in $ [ \mathcal{T}]_n$ which satisfies $ X_n (u) \geq 1$ for all vertices $u$ in this path. Let us denote by $ v_{ i}$ the leftmost vertex at level $ \theta_i$ which is connected to the root by such a path (if there is one) for $i \geq 2$. Note that on the event $ \mathcal{E}$, then all stopping times $ \theta_i$ are finite and thus, for all $i \geq 2$, the vertex $v_i$ is well-defined and its children have at most $1$ car arriving on it. 
In particular, we obtain for all $n \geq 2$
\begin{eqnarray*}
 \mathbb{P} \left( \mathcal{E}\right) &\leq& \prod_{i=2}^{n} \mathbb{P} \left( \mbox{all children of } v_i \mbox{ receive at most one car} | \theta_i < \infty \right) \\
 &\leq& \left( \sum_{ k\geq 0} q^k ( 1-q) ( \mu_0 + \mu_1)^k \right)^{n-1}= \left( \frac{1-q}{1-q (\mu_0 + \mu_1)} \right)^{n-1}.
\end{eqnarray*}
Since we assumed $ \mu(\{0,1\}) < 1$ and $n$ is arbitrary, the fraction this implies $ \mathbb{P} \left( \mathcal{E}\right) = 0$ and this is true for all value of $p$, which concludes the proof. 

\endproof
\subsection{Fully parked trees}
This section presents a description of the fully parked components and  mainly contains the results from Section 2 in \cite{chen2021enumeration}.

Recall our notation $ (a_x : x \in \mathbf{t})$ for the car arrivals on the vertices of a tree $ \mathbf{t}$.  A \emph{fully parked tree} is a tree decorated with its car arrivals configuration such that in the final configuration, all vertices are occupied spots and  they may be outgoing cars. We then denote by $ \mathbb{T}_n^{(k)}$ the set of (plane) fully parked trees with $n$ vertices and $k$ outgoing cars (hence $ n+k$ arriving cars) and introduce the bivariate generating function 
\begin{equation*} F (x,y) = \sum_{n \geq1} \sum_{k \geq 0} \sum_{ \mathbf{t} \in \mathbb{T}_n^{(k)}} w( \mathbf{t}) x^n y^k \quad \mbox{ where} \quad w( \mathbf{t}):= \prod_{ x \in \mathbf{t}} \mu_ {a_x}.\end{equation*}

The first important tool is a recursive equation obtained from an ``à la Tutte" recursive decomposition, i.e. by decomposing each fully parked tree according to the splitting at its root. Indeed, if the root of $ \mathbf{t} \in \mathbb{T}_n^{(k)}$ has $j \geq 0$ children, then all these $j$ children are the respective roots of a sequence $( \mathbf{t}_i : 1 \leq i \leq j)$ of  fully parked trees. Moreover, if for all $1 \leq i \leq j$, the tree $ \mathbf{t}_i$ belongs to $\mathbb{T}_{n_i}^{(k_i)}$  then the fact that  $\mathbf{t}$ has $n$ vertices and $k$ outgoing cars implies that $n = 1 + \sum_{i=1}^{j} n_i$ and $ k = \sum_{i = 1}^{j} k_i + a_{ \varnothing} -1$ where $ a_{ \varnothing}$ is the number of cars arriving at the root of $ \mathbf{t}$, see Figure \ref{fig:tutte}. Note that since the root of $ \mathbf{t}$ is an occupied spot, then we can not have $a_{ \varnothing} = 0$ and $ k_i = 0$ for all $ 1 \leq i \leq j$ simultaneously. Translating this observation on $F$ gives the following equation (see also \cite[Equation 21]{chen2021enumeration})
\begin{equation}\label{eq:tutte} F(x,y) = \frac{x}{y} \left( \frac{G(y)}{1 - F(x,y)} - \frac{G(0)}{1 - F(x,0)}\right).
\end{equation}

\begin{figure}[!h]
 \begin{center}
 \includegraphics[width=15cm]{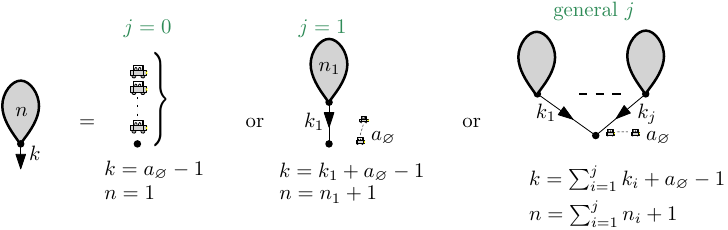}
 \caption{\label{fig:tutte}Illustration of the ``a la Tutte" recursive decomposition. On the left, a fully parked tree of size $n$ and $p$ outgoing cars. On the right, the different possibility depending on the number $j$ of the children of the root vertex.}
 \end{center}
 \end{figure}
Note that this equation characterizes $F$ since the coefficient in front of $x^n$ on the left-hand side only depends on the coefficients of $F(x,y)$ in front of $ x^{k}$ for $0 \leq k \leq n-1$. It involves not only $F(x,y)$ but also $F(x,0)$ its specialization at $y=0$. We thus apply the usual \emph{kernel method} \cite{MBM:quadratic} to solve this equation as it is done in \cite[Section 2]{chen2021enumeration}. We introduce $ P(f,f_0,x,y) =  \frac{G(y)}{1 - f} -  \frac{G(0)}{1 - f_0} + \frac{x f}{y}$, so that Equation~\eqref{eq:tutte} is equivalent to $ P(F(x,y), F(x,0), x,y) = 0$. The key idea is to find a function $ Y = Y(x)$ such that $ \partial_f P(F(x,Y(x)), F(x,0), x,Y(x)) = 0$ so that differentiating $ P(F(x,Y(x)), F(x,0), x,Y(x)) = 0$ with respect to $y$, we also obtain that $ \partial_y P(F(x,Y(x)), F(x,0), x,Y(x)) = 0$. Using the explicit form for the function $P$, we then have 
\begin{equation} \frac{G(Y)}{1- F(x,Y)} = \frac{Y}{x}, \qquad  \frac{G'(Y)}{1- F(x,Y)} = \frac{F(x,Y)}{x}, \qquad \mbox{and} \qquad  \frac{G(Y)}{1- F(x,Y)}  - \frac{G(0)}{1- F(x,0)}  = \frac{Y F(x,Y)}{x}.
\end{equation}
The existence of  a solution for $Y$ is guaranteed by \cite{MBM:quadratic} and eliminating the variable $F(x,Y)$, we can express $x$ and $F (x,0)$ in terms of $Y$ and get
\begin{equation}\label{eq:paramf0} x = \frac{Y G(Y)}{(G(Y)+ Y G'(Y))^2} \qquad \mbox{and} \qquad F(x,0) = 1 -  \frac{G(0) G(Y)}{(G(Y)- Y G'(Y))^2}.
\end{equation}
Plugging this back into Tutte's equation \eqref{eq:tutte}, we can obtain a parametrization of $ F(x,y)$. In particular, we can obtain its value  and radius of convergence in $x$ at $y=1$ , which will be useful thereafter and which we sum up in the following proposition. See also \cite[Proposition 1]{chen2021enumeration}.

\begin{proposition}\label{prop:parametization} 
As in Theorem \ref{thm:locthereshold}, we assume that there exists $t_c$ such that 
$$t_c := \inf \{ t> 0, (G(t) - t G'(t))^2 = 2 t^2 G(t) G''(t)\}.$$ 
We also suppose that $t_c>1$. Then, writing
$x = \hat{x} (Y)  = \frac{Y G(Y)}{(G(Y)+ Y G'(Y))^2},$ we have  \begin{eqnarray*} F(x,1) = \hat{F} (Y,1) =  \left\{ \begin{array}{lll}1+  \frac{ \varphi( Y) }{2 (G(Y)+ Y G'(Y))} \left( - \sqrt{1 - \frac{4 Y G(Y)}{\varphi (Y)^2}} - 1 \right) &  \text{ when } &0 \leq Y \leq 1, \\
1+  \frac{ \varphi( Y) }{2 (G(Y)+ Y G'(Y))} \left( \sqrt{1 - \frac{4 Y G(Y)}{\varphi (Y)^2}} - 1 \right) &  \text{ when } &1 < Y \leq t_c, \end{array}\right. \end{eqnarray*}
where $ \varphi(y) = (y+1) G(y) - y(y-1)G'(y)$. Moreover, the radius of convergence of $x \mapsto F(x,1)$ is $ \hat{x} (t_c) = \frac{ t_c G(t_c) }{(G(t_c)+ t_c G'(t_c))^2} $. 
\end{proposition}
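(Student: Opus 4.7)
The plan is to specialize Tutte's equation \eqref{eq:tutte} at $y = 1$ (using $G(1) = 1$) and turn it into a quadratic equation for $F(x, 1)$. Clearing denominators gives $F(x, 1)^2 + (\alpha - 1) F(x, 1) + (x - \alpha) = 0$, with $\alpha(x) := x G(0)/(1 - F(x, 0))$, so that the quadratic formula produces $F(x, 1) = ((1 - \alpha) \pm \sqrt{(1 + \alpha)^2 - 4 x})/2$. The rest of the argument parametrizes the coefficients through $Y$, simplifies the discriminant, and picks the correct branch.

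Using \eqref{eq:paramf0} (or equivalently the kernel-method relations, which also yield $F(x, Y) = Y G'(Y)/(G(Y) + Y G'(Y))$), a direct computation with the abbreviations $A := G(Y) + Y G'(Y)$ and $B := G(Y) - Y G'(Y)$ gives $\alpha = Y B/A$ and $x = Y G(Y)/A^2$. Since $\varphi(Y) = A + Y B$, this yields $(1 + \alpha)^2 - 4 x = (\varphi(Y)^2 - 4 Y G(Y))/A^2$, and consequently $F(x, 1) = (A - Y B \pm \sqrt{\varphi(Y)^2 - 4 Y G(Y)})/(2 A)$. Using $A - Y B = 2 A - \varphi$ and factoring $\varphi/(2 A)$ rewrites this as $1 + (\varphi/(2 A)) (\pm\sqrt{1 - 4 Y G/\varphi^2} - 1)$, matching the proposition.

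To fix the sign, I would match at $Y = 0$: there $x = 0$ and $F(0, 1) = 0$, which forces the minus branch, valid on $[0, 1]$. The subtle point is $Y = 1$: a direct computation (using $G(1) = 1$, $G'(1) = m$, $\varphi'(1) = 1 + m$) shows that $\varphi(Y)^2 - 4 Y G(Y)$ has a double zero at $Y = 1$. Consequently, the positive square root is continuous but not analytic at $Y = 1$, so the sign in front of $\sqrt{\varphi^2 - 4 Y G}$ must flip across $Y = 1$ in order for $\hat{F}(Y, 1)$ (and hence $F(x, 1)$, since $\hat{x}$ is smooth at $Y = 1$ under $t_c > 1$) to be analytic on the whole arc $Y \in (0, t_c)$. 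I expect this analytic-continuation argument to be the main obstacle.

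For the radius of convergence, the observation is that the algebraic function $F(x, 1)$ has its square-root singularity exactly at a critical value of the parametrization $\hat{x}$. Taking the logarithmic derivative of $\hat{x}(Y) = Y G(Y)/A^2$ and clearing denominators shows that $\hat{x}'(Y) = 0$ is equivalent to $(G(Y) + Y G'(Y))^2 = 4 Y G(Y) G'(Y) + 2 Y^2 G(Y) G''(Y)$, which rearranges to $(G(Y) - Y G'(Y))^2 = 2 Y^2 G(Y) G''(Y)$ — exactly the equation defining $t_c$. The infimum definition of $t_c$ then guarantees $\hat{x}' > 0$ on $(0, t_c)$, so $\hat{x}$ is strictly increasing on $[0, t_c]$ with maximum $\hat{x}(t_c) = t_c G(t_c)/(G(t_c) + t_c G'(t_c))^2$, and this maximum is the radius of convergence of $F(\cdot, 1)$.
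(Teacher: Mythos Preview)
Your derivation of the parametrization is correct and is essentially the route the paper takes (solve Tutte's equation at $y=1$ as a quadratic in $F(x,1)$, then feed in the kernel parametrization of $x$ and $F(x,0)$); the paper simply says ``follows easily from the kernel method'' where you spell out the algebra. Your branch-selection argument via the double zero of $\varphi(Y)^2-4YG(Y)$ at $Y=1$ is a clean justification of the sign flip that the paper leaves implicit.

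The only substantive difference is in the radius-of-convergence argument. You identify $t_c$ as the first critical point of $\hat{x}$ by showing $\hat{x}'(Y)=0\iff (G-YG')^2=2Y^2GG''$, which indeed gives the lower bound $\hat{x}(t_c)$ for the radius and is a nice observation. However, to conclude that the radius is \emph{exactly} $\hat{x}(t_c)$ you still need to check that the square-root branch point of $Y(x)$ at $\hat{x}(t_c)$ actually propagates to $F(x,1)$ and is not cancelled (equivalently, that $\partial_Y\hat F(Y,1)\big|_{Y=t_c}\ne 0$). The paper handles this point differently: it computes directly that $\partial_x^2 F(x,1)\to\infty$ as $x\to\hat{x}(t_c)$, the divergent factor being precisely $1/\big((G-YG')^2-2Y^2GG''\big)$ evaluated at $Y\to t_c$. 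Either approach works, but your final sentence asserts rather than verifies the upper bound; adding the one-line check that $\hat F$ has nonzero $Y$-derivative at $t_c$ (or citing the paper's $F''$ computation) would close the argument.
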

 
\proof The parametrization follows easily from the kernel method applied to Tutte's equation \eqref{eq:tutte}. In particular, it is well-defined and the quantity inside the square root is non negative for all $ 0 \leq Y \leq t_c$. To see that $\hat{x}(t_c)$ is the radius of convergence of $F(x,1)$, we first observe that the parametrization implies that this radius is at least  $\hat{x}(t_c)$, see \cite{MBM:quadratic}. If $G$ or one of its first two derivative of $G$ is infinite at $ t_c$, then the result is clearly true. Otherwise, thanks to our parametrization, we can show that the function $ F(x,1) $ and its first derivative have a finite limit as $ x \to  \hat{x}(t_c)$, see also \cite[Lemma 14]{chen2021enumeration}. Moreover, by tedious computations, we can show that 
\begin{equation*}
 \lim_{x \to  \hat{x}(t_c)}\frac{ \partial^2 F(x,1)}{ \partial x^2} = \lim_{ y \to t_c} \frac{(G(y)+yG'(y))^5}{y G(y)} \left( \sqrt{ 1 + \frac{4 y G(y)}{ \varphi (y)^2 - 4 yG(y)}} - 1\right) \cdot \frac{1}{(G(y) - y G'(y))^2 - 2 y^2 G(y) G''(y)}
 \end{equation*}
 The first factor has a finite limit whereas the denominator vanishes on the right-hand side. This concludes the proof.

 \endproof

\subsection{Decomposition into fully-parked components}
In this section, we present the main decomposition from which we derive our characterization of the subcritical regime for the parking process.
We  remember that $X$ is the number of cars visiting the root  of $ \mathcal{T}$ with $ p_ \circ = \mathbb{P} \left( X=0\right)$ and we write $ \mathcal{C}( \varnothing)$ for the cluster of the root  (either white (void) or black (parked)) in $ \mathcal{T}$ after parking.

Let us fix $k \geq 0$. For all $x \geq 0$, we denote by $[y^k] F(x,y)$ denote the coefficient in front of $y^k$ in the (formal) power series $ y \mapsto F(x,y)$. For all $x \geq 0$ such that  $[y^k] F(x,y) < \infty$, 
we say that $ \mathcal{F}$ is a (random) fully parked tree with $k$ outgoing cars and $x$-Boltzmann law if for all $n \geq 0$ and $ \mathbf{t} \in \mathbb{T}_n^{(k)}$, we have 
\[ \mathbb{P} \left( \mathcal{F} = \mathbf{t}\right) = \frac{1}{[y^k] F(x,y)} x^n w( \mathbf{t}). \]
In the subcritical regime, we can link the variable $X$ and the coefficients of $F$ through the following lemma. 
\begin{lemma}\label{lem:xboltzmann} When the law $ \mu$ is subcritical for the parking process,  for all $ k \geq 0$, we have 
\begin{equation*} \mathbb{P} \left( X= k+1 \right) = \frac{1- q p_ \circ}{q} [y^k] F \left( \frac{ q (1-q)}{( 1- q p_ \circ)^2}, y \right). \end{equation*}
Moreover, conditionally on $ X = k+1$, the cluster of the root $  \mathcal{C}( \varnothing)$ is a  fully parked tree with $k$ outgoing cars and $ \frac{ q (1-q)}{( 1- q p_ \circ)^2}$-Bolztmann law. 
\end{lemma}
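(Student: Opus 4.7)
The plan is to condition on the event $\{X \geq 1\}$ (the root is occupied), and to pin down the full joint law of $\mathcal{C}(\varnothing)$ by computing $\mathbb{P}(\mathcal{C}(\varnothing) = \mathbf{t})$ for every fixed plane fully parked tree $\mathbf{t} \in \mathbb{T}_n^{(k)}$. The second statement of the lemma will then be immediate, and the first will follow by summing over $\mathbf{t} \in \mathbb{T}_n^{(k)}$ and over $n \geq 1$.

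First I would invoke Lemma \ref{lem:finitecluster}, which guarantees that in the subcritical regime all black clusters are almost surely finite. Hence $\mathcal{C}(\varnothing)$, equipped with its inherited plane structure and its car arrivals, is almost surely a finite plane fully parked tree with $k = X - 1$ outgoing cars whenever $X \geq 1$. This makes the combinatorial decomposition that follows well-defined.

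Next I would describe $\mathcal{C}(\varnothing)$ vertex by vertex. Fix $\mathbf{t} \in \mathbb{T}_n^{(k)}$ and, for each $v \in \mathbf{t}$, let $j(v)$ denote the number of its children inside $\mathbf{t}$. I would sum over the total number $m \geq j(v)$ of children of $v$ in $\mathcal{T}$, over the $\binom{m}{j(v)}$ ways to interleave the $j(v)$ in-cluster children with the $m - j(v)$ white ones in the plane order, and require that each of the latter be the root of a subtree on which parking leaves its root empty (probability $p_\circ$). The branching property of $\mathcal{T}$, together with the fact that cars only travel towards the root, makes the subtrees rooted at distinct children of $v$ independent; hence the white children contribute an independent factor $p_\circ^{m - j(v)}$, while the black ones yield the recursion captured by the product over $v \in \mathbf{t}$. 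Using the geometric offspring law $\nu_m = q^m(1-q)$ and the identity $\sum_{l \geq 0} \binom{l+j}{j} z^l = (1-z)^{-(j+1)}$, the per-vertex contribution collapses to
\[
\sum_{m \geq j(v)} q^m (1-q) \binom{m}{j(v)} p_\circ^{m - j(v)} = \frac{(1-q)\, q^{j(v)}}{(1 - q p_\circ)^{j(v) + 1}}.
\]

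Finally, multiplying over $v \in \mathbf{t}$ and using $\sum_{v \in \mathbf{t}} j(v) = n - 1$ and $|\mathbf{t}| = n$, the product telescopes to $\frac{1 - q p_\circ}{q} \cdot x^n$ with $x = q(1-q)/(1-qp_\circ)^2$, weighted by the arrival factor $w(\mathbf{t})$. Summing over $\mathbf{t} \in \mathbb{T}_n^{(k)}$ and $n \geq 1$ and recognizing $\sum_{n, \mathbf{t}} w(\mathbf{t}) x^n = [y^k] F(x, y)$ yields the desired formula, and the conditional Boltzmann description then drops out by normalization. The main obstacle I expect is to justify cleanly that white subtrees contribute independently: one has to argue that whether a given child of a cluster vertex is white, and the parking configuration below it, is measurable with respect to the (independent) BGW subtree rooted at that child and its own arrivals. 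This is precisely where the upward nature of the parking dynamics and the branching property must be combined with the layer-by-layer construction of $X$ used in the proof of Lemma \ref{lem:finitecluster}.
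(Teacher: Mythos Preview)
Your proposal is correct and follows essentially the same approach as the paper: fix a fully parked tree $\mathbf{t}\in\mathbb{T}_n^{(k)}$, compute $\mathbb{P}(\mathcal{C}(\varnothing)=\mathbf{t})$ by combining the branching property with the geometric offspring weight and the factor $p_\circ$ for each white child, then sum and invoke Lemma~\ref{lem:finitecluster}. The only cosmetic difference is bookkeeping: the paper groups the white children by the $2n-1$ corners of $\mathbf{t}$ (each corner contributing a factor $(1-qp_\circ)^{-1}$), whereas you perform the equivalent per-vertex negative-binomial sum $\sum_{m\geq j(v)} q^m(1-q)\binom{m}{j(v)}p_\circ^{\,m-j(v)}=(1-q)q^{j(v)}(1-qp_\circ)^{-(j(v)+1)}$; since $\sum_v(j(v)+1)=2n-1$, both computations yield the same factor $(1-q)^n q^{n-1}(1-qp_\circ)^{-(2n-1)}$.
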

 \begin{proof}
The proof follows from a simple decomposition according to the cluster of the root. We emphasize here the important role of the geometric offspring distribution here: in the tree $ \mathcal{T}$, each vertex has a probability $ q^k (1-q)$ to have $k$ children, which is equivalent to saying that each edge carries a weight $q$ and each vertex carries a weight $(1-q)$. Let us fix  a finite fully parked tree $\mathbf{t} \in  \mathbb{T}_n^{(k)}$ and compute the probability that $   \mathcal{C}( \varnothing) =  \mathbf{t}$. The weight of this tree is the product of the weight $ w ( \mathbf{t})$ of the car arrivals on it, the weight $(1-q)^n$ of its $n$ vertices and $q^{n-1}$ for its $n-1$ edges. Moreover, since $\mathbf{t}$ has  $n$ vertices, there are $2n-1$ corners where there can be empty spots. Each empty spot comes with a weight  $ q$ for edge between the vertex and a vertex of $ \mathbf{t}$ multiplied by  $p_ \circ$ since it can be seen as the root of a Bienaym\'e--Galton--Watson with an empty root. The weight of each corner is then $ \frac{1}{1 - q p_ \circ}$. See Figure  \ref{fig:decomp} for an example. We thus obtain 
\begin{equation*} \mathbb{P} \left(   \mathcal{C}( \varnothing) =  \mathbf{t}\right) = w ( \mathbf{t}) (1-q)^n q^{n-1} \left( \frac{1}{1 - q p_ \circ}\right)^{2n-1} = \frac{(1-q p_ \circ)}{q} \cdot \left( \frac{q (1-q)}{ (1 - q p_ \circ)^2}\right)^n w ( \mathbf{t}) \end{equation*}
 
 \begin{figure}[!h]
  \begin{center}
  \includegraphics[width=16cm]{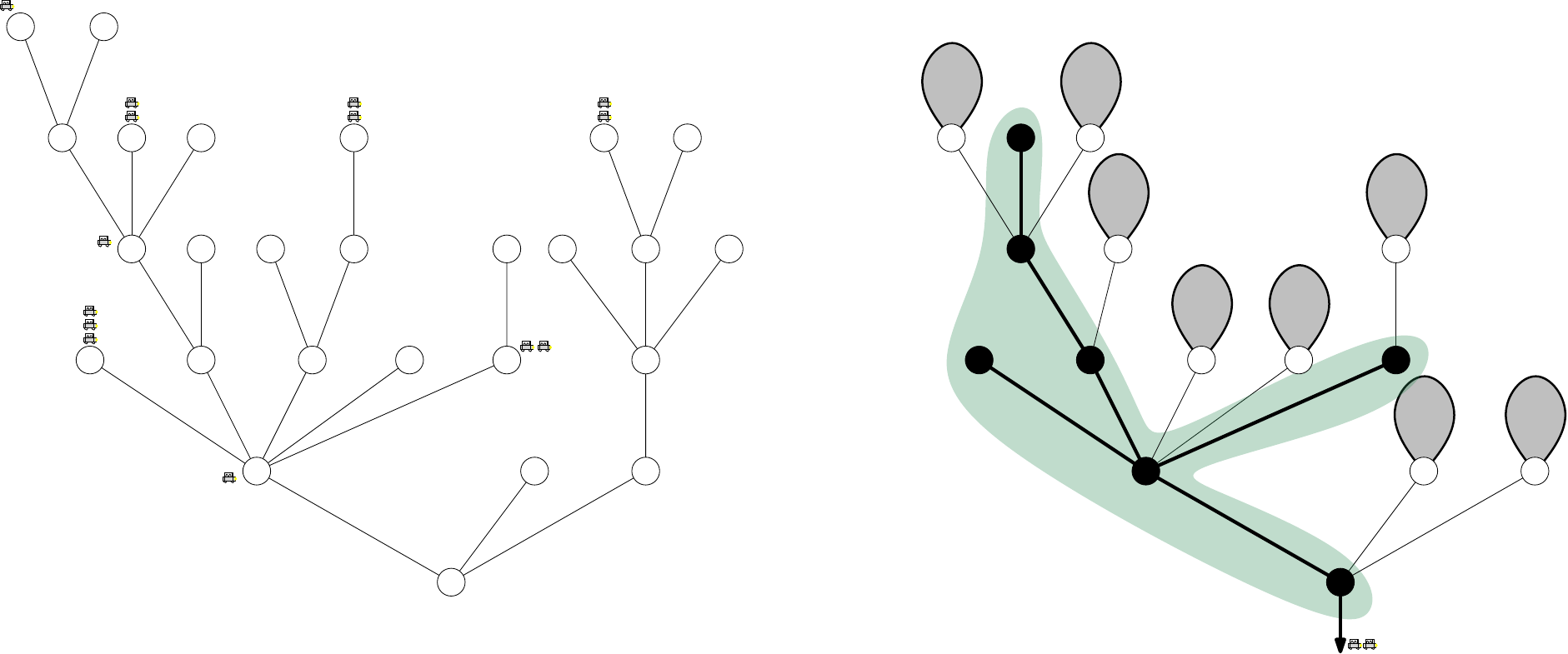}  \caption{ \label{fig:decomp}On the left, a tree together with its car arrivals configuration. On the right, the (black) cluster of root of this tree together with all empty spots attached to it in the initial tree. All this empty spots are the root of independent Bienaym\'e--Galton--Watson trees where in the final configuration, the root is empty. This cluster is a fully parked tree with $7$ vertices and there are $8$ empty spots attached to it. The probability to observe this cluster is $( \mu_0^2 \mu_1^2 \mu_2^2 \mu_3) q^6 (1-q)^7 (q p_ \circ)^8$ }
  \end{center}
  \end{figure}
Moreover, we know from Lemma \ref{lem:finitecluster} that all black clusters are finite in the subcritical regime. Thus, for all $ k \geq 0$
\begin{eqnarray*} \mathbb{P} \left( X= k+1 \right) &=& \mathbb{P} \left( X= k+1 \mbox{ and } | \mathcal{C} ( \varnothing)| < \infty \right) \\
&=& \sum_{n \geq 1}\sum_{ \mathbf{t} \in \mathbb{T}_{n}^{(k)}} \mathbb{P} \left(  | \mathcal{C} ( \varnothing)|  = \mathbf{t}\right) \\
&=& \sum_{n \geq 1}\sum_{ \mathbf{t} \in \mathbb{T}_{n}^{(k)}}\frac{(1-q p_ \circ)}{q} \cdot \left( \frac{q (1-q)}{ (1 - q p_ \circ)^2}\right)^n w ( \mathbf{t})  = \frac{(1- q p_ \circ)}{q} [y^k] F \left( \frac{ q (1-q)}{( 1- q p_ \circ)^2}, y \right),
\end{eqnarray*}
which concludes the proof. 
 \end{proof}

Moreover, in the subcritical regime, the variable $X$ is almost surely finite, thus splitting according to the values of $X$, we obtain
\begin{equation}\label{eq:recflux} 1 = p_ \circ + \sum_{k \geq 0} \mathbb{P} (X = k+1) = p_\circ + \frac{1- q p_ \circ}{q} F \left( \frac{ q (1-q)}{( 1- q p_ \circ)^2}, 1 \right).
\end{equation} 
Note that the function $p \mapsto p + \frac{1- q p }{q} F \left( \frac{ q (1-q)}{( 1- q p)^2}, 1 \right)$ is an increasing function of $p$. Thus the solution to this equation is unique if there is one. 
This gives us the following characterization of the subcritical regime.

\begin{proposition}[F-characterization of the subcritical regime]\label{prop:subcriticF} The law $ \mu$ is subcritical for the parking process on a Bienaym\'e--Galton--Watson tree with geometric offspring distribution with parameter $ q$ if and only if there exists a positive solution $p_{ \circ}>0$ to the equation 
\begin{equation}\label{eq:characp} \frac{1 - q p}{q} \cdot F\left( \frac{q (1-q)}{(1-q p)^2}, 1\right) + p = 1.
\end{equation}
\end{proposition}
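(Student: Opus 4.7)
The forward direction $(\Rightarrow)$ has essentially been established just above the statement. In the subcritical regime, Lemma~\ref{lem:dicho} gives $X<\infty$ almost surely; summing the identities of Lemma~\ref{lem:xboltzmann} over $k\geq 0$ and adding the contribution of $\{X=0\}$ yields equation~\eqref{eq:characp} at $p=p_\circ$. It remains to check that $p_\circ = \mathbb{P}(X=0)$ is strictly positive, which is forced by $\mu_0 > 0$ in the subcritical regime: otherwise every vertex holds at least one car and is always occupied, so $\mathcal{C}(\varnothing) = \mathcal{T}$ is infinite with positive probability, which in turn forces $X = \infty$ on this event (since $X$ is a non-trivial functional of $\mathcal{C}(\varnothing)$ that blows up whenever the cluster does), contradicting Lemma~\ref{lem:dicho}.

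For the converse $(\Leftarrow)$, the key observation is that the computation of $\mathbb{P}(\mathcal{C}(\varnothing)=\mathbf{t})$ in the proof of Lemma~\ref{lem:xboltzmann} is in fact regime-independent: whenever $\mathcal{C}(\varnothing)$ is finite and equal to a given fully parked tree $\mathbf{t}$, the subtrees of $\mathcal{T}$ hanging off each empty corner of $\mathbf{t}$ are i.i.d.\ copies of $\mathcal{T}$ with empty root after parking, by BGW self-similarity and the layer-by-layer parking construction from Lemma~\ref{lem:finitecluster}. Summing over all finite fully parked trees then yields the universal identity
\[
p_\circ + \frac{1-qp_\circ}{q}\, F\!\left(\frac{q(1-q)}{(1-qp_\circ)^2},\,1\right) \;=\; \mathbb{P}\bigl(|\mathcal{C}(\varnothing)|<\infty\bigr),
\]
valid in every regime. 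Hence equation~\eqref{eq:characp} holds at $p = p_\circ$ if and only if $\mathcal{C}(\varnothing)$ is almost surely finite, which (as $X$ is then expressible as a finite sum involving $\mathcal{C}(\varnothing)$ and the car arrivals inside it) is equivalent to $X<\infty$ almost surely, and hence to subcriticality by Lemma~\ref{lem:dicho}.

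Combined with the monotonicity of $p \mapsto p + \frac{1-qp}{q}F\!\left(\frac{q(1-q)}{(1-qp)^2},1\right)$ noted just before the statement, equation~\eqref{eq:characp} has at most one positive solution, so existence of any positive solution is equivalent to $p_\circ$ being that solution, and hence (by the previous paragraph) to subcriticality. The main subtle point is ensuring that if a positive solution $p^*$ of~\eqref{eq:characp} exists, it is genuinely $p_\circ$ and not some spurious larger root: from the universal identity and monotonicity one only gets $p_\circ \leq p^*$ directly, and the matching lower bound $p_\circ \geq p^*$ is the main technical obstacle. I would obtain it by applying the same cluster decomposition to the truncated tree $[\mathcal{T}]_n$ (with height-dependent corner factors of the form $(1-qp_{n-h-1})^{-1}$) and exploiting the monotone decrease $p_n \searrow p_\circ$ of the empty-root probabilities, so that passing to the limit forces $\Phi(p_\circ) = 1$ as soon as a positive solution of~\eqref{eq:characp} exists.
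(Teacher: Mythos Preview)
Your forward direction $(\Rightarrow)$ matches the paper's argument.

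For the converse $(\Leftarrow)$, your approach diverges substantially from the paper's, and the final step you flag as the ``main technical obstacle'' is a genuine gap rather than a routine detail. Your universal identity is correct and gives $\Phi(p_\circ)=\mathbb{P}(X<\infty)$ in every regime, but the sketched truncation argument never explains where the hypothesis ``there exists a positive solution $p^*$ to~\eqref{eq:characp}'' enters. If the truncated identities really passed to the limit and yielded $\Phi(p_\circ)=1$, they would do so regardless of whether~\eqref{eq:characp} has a solution, proving subcriticality unconditionally --- which is absurd. So something must fail in the supercritical regime (mass escapes through fully parked clusters reaching the truncation height $n$), and you have not indicated how the existence of $p^*$ prevents this escape. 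The height-dependent corner factors are also awkward: vertices at height exactly $n$ in $[\mathcal{T}]_n$ have no children at all, so the corner-weight formula $1/(1-qp_{n-h-1})$ breaks down there.

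The paper's route is cleaner and essentially algebraic. It uses the hypothetical solution $p^*$ (which it also calls $p_\circ$) to \emph{define} a random variable $Z$ through the generating function
\[
\mathbf{Z}(y)=p^*+\frac{1-qp^*}{q}\,F\!\left(\frac{q(1-q)}{(1-qp^*)^2},\,y\right),
\]
and then checks, by a direct computation with Tutte's equation~\eqref{eq:tutte}, that $Z$ satisfies the recursive distributional equation
\[
Z\ \overset{(d)}{=}\ \Bigl(\sum_{i=1}^{Y}Z_i+A-1\Bigr)_+,\qquad Y\sim\nu,\ A\sim\mu,\ Z_i\ \text{i.i.d.}\sim Z.
\]
The existence of a finite solution to this RDE immediately gives (by the standard layer-by-layer stochastic domination argument) that the actual outgoing flux $(X-1)_+$ is dominated by $Z$, hence $X<\infty$ a.s., hence subcritical. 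This is where the hypothesis is genuinely spent: $p^*$ is used to build the dominating variable $Z$, not to control a limit.
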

 The proof follows the same line as \cite[Proposition 2]{aldous2022parking}
\proof
If  the parking process is subcritical, then $p_ \circ$ is solution of Equation \eqref{eq:characp} by the above computations. 
Reciprocally suppose that there exists a solution $ p_ \circ$ to the equation \eqref{eq:characp}. Then there exist a random variable $Z$ whose generating function is 
\begin{equation*} \mathbf{Z} (y) := \frac{1 - q p_ \circ}{q} \cdot F\left( \frac{q (1-q)}{(1-q p_ \circ)^2}, y\right) + p_ \circ. 
\end{equation*}
Then, writing $ x_ \circ :=q (1-q)/(1-q p_ \circ)^2$ and $F_0 (x):= F(x,0)$,  we compute
\begin{align}
&\hspace{-1cm}\frac{1}{y} \left(\frac{(1-q) G(y)}{\left( 1- q \mathbf{Z}(y)\right) }- \frac{(1-q) G(0)}{\left( 1- q \mathbf{Z}(0) \right) } \right) + \frac{(1-q) G(0)}{\left( 1- q  \mathbf{Z}(0) \right)}  \nonumber \\
=&\frac{1}{y} \left(\frac{(1-q) G(y)}{(1- q p_ \circ) \left( 1 -  F\left( x_ \circ, y\right)\right)}- \frac{(1-q) G(0)}{(1- q p_ \circ) \left( 1 -  F\left( x_ \circ, 0\right)\right)} \right) + \frac{(1-q) G(0)}{(1- q p_ \circ) \left( 1 -  F\left( x_ \circ, 0\right)\right)} \nonumber \\
\overset{ \eqref{eq:tutte}}{=}& \frac{1-q}{1-q p_ \circ}\cdot \frac{1}{x_ \circ} F(x_ \circ, y) +  \frac{(1-q) G(0)}{(1- q p_ \circ) \left( 1 -  F\left( x_ \circ, 0\right)\right)} \label{eq:recgen}
\end{align}
Moreover, from Equation \eqref{eq:characp}, we obtain 
$$F \left( x_ \circ, 1 \right) = \frac{(1-p_ \circ)q}{1- p_ \circ q} \quad \mbox{and} \quad 1- F \left( x_ \circ, 1 \right) = \frac{ 1-q}{1 - p_ \circ q}.$$
Thus, using Tutte's equation \eqref{eq:tutte} with $y=1$, we obtain
\begin{eqnarray*}
 \frac{(1-q)G(0)}{(1-q p_ \circ) \left(1- F \left( x_ \circ, 0 \right)\right)} &=& \frac{1-q}{1-q p_ \circ} \left( \frac{1}{1- F \left( x_ \circ, 1 \right)}- \frac{F( x_ \circ,1)}{x_ \circ}\right) \\
&=& 1  - \frac{(1-q)}{(1-q p_ \circ)} \cdot \frac{(1-q p_ \circ)^2}{q(1-q)} \cdot \frac{(1-p_ \circ)q}{(1-p_ \circ q)}\\
&=&1 - (1- p_ \circ) = p_ \circ.
\end{eqnarray*}
Hence, plugging this equality in \eqref{eq:recgen}, we obtain 
$$ \mathbf{Z} (y) = \frac{1}{y} \left(\frac{(1-q) G(y)}{\left( 1- q \mathbf{Z}(y)\right) }- \frac{(1-q) G(0)}{\left( 1- q \mathbf{Z}(0) \right) } \right) + \frac{(1-q) G(0)}{\left( 1- q  \mathbf{Z}(0) \right)}. $$
This identity is equivalent to the following recursive distributional equation for $Z$:
$$ Z \overset{(d)}{=} \left(\sum_{i = 1}^{Y} Z_i + A -1\right)_+, $$
where on the right-hand side, the variables $(Z_i : i \geq 1)$ are i.i.d. copies of $Z$, the variable $Y$ has law $\nu$ the offspring distribution of the tree, the variable $A$ has law $ \mu$ the car arrivals distribution and all variables are independent.

This recursive equation enables us to construct the tree $ \mathcal{T}$ (using the variable $ Y$) together with its car arrivals (using the variable $ A$) layers by layers in a coherent matter ans so that in the end, the flux of out cars has law $Z$. 
In particular, it implies that the parking process with car arrival law $ \mu$ on the tree $ \mathcal{T}$ is subcritical. 
 \endproof

\section{Probabilistic consequences}\label{sec:proba}

\subsection{Location of the thereshold: Theorem \ref{thm:locthereshold}}

We have now all the tools to prove Theorem \ref{thm:locthereshold}.  Recall that thanks to Proposition \ref{prop:subcriticF}, the parking process is subcritical if and only if \eqref{eq:characp} has a positive solution $ p_ \circ$. Let us write $x_ \circ = q(1-q)/(1- q p_ \circ)^2$, Equation \eqref{eq:characp} is equivalent to 
\begin{equation}\label{eq:characbis}\sqrt{\frac{1-q}{qx_ \circ}} (F(x_ \circ,1) - 1) = - \frac{1-q}{q}.\end{equation}
Note that the function $ x \mapsto \sqrt{(1-q)/(qx)} (F(x,1) - 1)$ is increasing in $x$ and assuming $ t_c \geq 1$, let us compute its value at $ \hat{x}(1)$ using the parametrization given in Proposition \ref{prop:parametization}. Recalling  $ \varphi(y) = (y+1) G(y) - y(y-1)G'(y)$, we can compute 
\begin{eqnarray*}
\sqrt{(1-q)/(q \hat{x}(1))} (F(\hat{x}(1),1) - 1) &=&  \sqrt{\frac{(1-q)(G(1)+ 1 G'(1))^2}{q  G(1)}} \cdot \frac{ - \varphi( 1) }{2 (G(1)+ 1 G'(1))} \\
&=& - \sqrt{\frac{1-q}{q}}  \leq - \frac{1-q}{q}
\end{eqnarray*}
 for $ q \in (1/2,1)$. Thus, is $ x_ \circ$ is a solution of Equation \eqref{eq:characbis}, then $ x_ \circ$ can be written as $ \hat{x} (Y_ \circ)$ for some $Y_ \circ \geq 1$. In particular, the parking process is always supercritical if $ t_c < 1$. Suppose now $ t_c \geq 1$. It suffices to check if  $ \sqrt{(1-q)/(qx_c)} (F(x_c,1) - 1) \geq - (1-q)/q$ where $x_c$ is the radius of convergence of $ x \mapsto F(x,1)$. Moreover, again by Proposition \ref{prop:parametization}, this radius is $ \hat{x}(t_c)$ and
we obtain
\begin{equation*}  \sqrt{\frac{(1-q) \varphi(t_c)^2}{q \cdot 4 t_c G(t_c)}}  \left( \sqrt{1 - \frac{4 t_c G(t_c)}{\varphi (t_c)^2}} - 1 \right)\geq - \frac{1-q}{q}\quad \Leftrightarrow\quad \sqrt{1 - \frac{4 t_c G(t_c)}{\varphi (t_c)^2}} \geq 1 -   \sqrt{\frac{1-q}{q}} \sqrt{\frac{4 t_c G(t_c)}{\varphi (t_c)^2}}.
\end{equation*}
Since both sides of the inequality on the right are positive (this is a consequence of the parametrization for the right-hand-side)  and we can take the square of both sides and substracting $1$, we obtain 
$$- \frac{4 t_c G(t_c)}{\varphi (t_c)^2} \geq  \frac{1-q}{q} \cdot  \frac{4 t_c G(t_c)}{\varphi (t_c)^2} - 2 \sqrt{ \frac{1-q}{q}} \sqrt{\frac{4 t_c G(t_c)}{\varphi (t_c)^2}}  $$
Finally, we obtain that the existence of a positive solution to  Equation \eqref{eq:characp} is equivalent to 
$$  \frac{ t_c G(t_c)}{\varphi (t_c)^2} \leq  q (1-q),$$
and Theorem \ref{thm:locthereshold} follows.

\subsection{Examples.}\label{sec:examples}
 We now come back to the three natural examples of car arrivals law mentioned in the introduction, namely binary, Poisson and geometric car arrivals. Recall that to check it $ t_c \geq 1$, it suffices to verify if $ (1- G'(1))^2 \geq 2 G''(1)$.  \medskip \\
 \textit{Binary arrivals.} Let us start by the simplest example of non trivial car arrival distribution, namely the binary distribution $ \mu_ \alpha$: each vertex receives either two cars with probability $ \alpha/2$ or no car with probability $ 1 - \alpha/2$ independently for some parameter $ \alpha \in [0,1]$.  In this case, the generating function of  $ \mu_ \alpha$ is $ G(t) := (1- \alpha/2) + \alpha/2 t^2$.
 \begin{eqnarray*} t_c \geq 1\ \Leftrightarrow \ (1- G'(1))^2 \geq 2 G''(1) 
\ \Leftrightarrow\  (1- \alpha)^2 \geq 2 \alpha 
\  \Leftrightarrow\  \alpha \notin (2 - \sqrt{3}, 2 + \sqrt{3})
 \end{eqnarray*}
Moreover, the equation $ (G(t)- tG'(t))^2 =  2 G(t)G''(t)$ is a quadratic equation in the variable $t^2$ and has only one solution of $t_c =  \sqrt{\frac{2}{ \sqrt{3}- 1} }\sqrt{\frac{2 - \alpha}{\alpha}}$ which is larger than 1 when $ \alpha \leq 2 - \sqrt{3}$. 
 At this $t_c$ and recalling that $ \varphi(y) = (y+1) G(y) - y (y-1) G'(y)$,  we obtain 
 \begin{eqnarray*} \frac{t_c G(t_c)}{ \varphi (t_c)^2} = \frac{3 \sqrt{3 + 2 \sqrt{3}}}{2 \left(3+2 \sqrt{2 \sqrt{3}-3} \sqrt{ \frac{2 - \alpha}{ \alpha}}\right)^2 \sqrt{ \alpha (2- \alpha)}} \leq \frac{1}{4}\quad\mbox{ when }\alpha \leq 2 - \sqrt{3}
 \end{eqnarray*}
Thus, using Theorem \ref{thm:locthereshold}, we obtain that the parking process is subcritical if and only if 
$$  \alpha \leq 2 - \sqrt{3} \quad \mbox{ and } \quad q \leq \frac{1}{2} \left( 1 + \sqrt{1 - \frac{6 \sqrt{2 \sqrt{3}+3}}{ \sqrt{ \alpha (2- \alpha)} \left( 3 + \sqrt{ 2 \sqrt{3}- 3} \sqrt{ \frac{2- \alpha}{ \alpha}}\right)^2}}\right). $$\medskip \\
\textit{Geometric arrivals.} Now assume that the car arrival distribution$ \mu_ \alpha = \sum_{k \geq 0} p^k (1-p) \delta_k$ is a geometric distribution with mean $ \alpha = p/(1-p)$. Its generating function is then $G(t) :=1/(1+ \alpha - \alpha t)$. Thus 
 \begin{eqnarray*} t_c \geq 1\ \Leftrightarrow \ (1- G'(1))^2 \geq 2 G''(1) 
\ \Leftrightarrow\  (1- \alpha)^2 \geq 4 \alpha 
\  \Leftrightarrow\  \alpha \notin (1/3,1)
 \end{eqnarray*}
Moreover, the quantity $ (G(t)- tG'(t))^2 -  2 G(t)G''(t)$ is a rational fraction in $t$ and its numerator is a linear function of $t$. We can thus determine, for $ \alpha \leq 1/3$, 
$$t_c = \frac{1 + \alpha}{4 \alpha}, $$
and we obtain
 \begin{eqnarray*} \frac{t_c G(t_c)}{ \varphi (t_c)^2} = \frac{27 \alpha ( 1+ \alpha)^2}{4 (1+ 9 \alpha)^2}.
 \end{eqnarray*}
Using Theorem \ref{thm:locthereshold}, the parking process is subcritical if and only if  
$$  \alpha \leq \frac{1}{3}  \quad \mbox{ and } \quad  q \leq \frac{1}{2} \left( 1 + \frac{(1-3 \alpha )^{3/2}}{1 + 9 \alpha}\right). $$ \medskip \\
\textit{Poisson arrivals.} Lastly, we assume that the car arrivals distribution  $ \mu_ \alpha$ is Poisson with mean $ \alpha$, which means that its generating function is $ G(t) =  \exp\left( a (t-1)\right)$. Then 
 \begin{eqnarray*} t_c \geq 1\ \Leftrightarrow \ (1- G'(1))^2 \geq 2 G''(1) 
\ \Leftrightarrow\  (1- \alpha)^2 \geq 2 \alpha^2 
\  \Leftrightarrow\  \alpha \in [-\sqrt{2}-1,\sqrt{2}-1]
 \end{eqnarray*}
Moreover, the quantity $ (G(t)- tG'(t))^2 -  2 G(t)G''(t)$ is a quadratic polynomial in $ t$ multiplied by  $ \mathrm{e}^{2 a (t-1)}$. It cancels twice, one of the two solutions is positive and corresponds to $$t_c = \frac{ \sqrt{2}- 1}{ \alpha}.$$
At this $t_c$, we obtain 
$$ \frac{t_c G(t_c)}{ \varphi (t_c)^2}  = \frac{( \sqrt{2}-1) \alpha \mathrm{e}^{ \alpha - (\sqrt{2}-1)}}{ (\sqrt{ 2} a - 4 + 3 \sqrt{2})^2}.$$
and thanks to Theorem \ref{thm:locthereshold}, the parking process is subcritical if and only if

$$ \alpha \leq \sqrt{2} - 1  \quad \mbox{ and } q \leq \frac{1}{2} \left( 1+ \sqrt{ 1 - \frac{ 2( \sqrt{2}- 1)  \alpha \mathrm{e}^{ \alpha -( \sqrt{2}- 1)}}{ ( \alpha + 3  - 2 \sqrt{2})^2}}\right).$$

\begin{figure}[!h]
 \begin{center}
 \includegraphics[width=5.5cm]{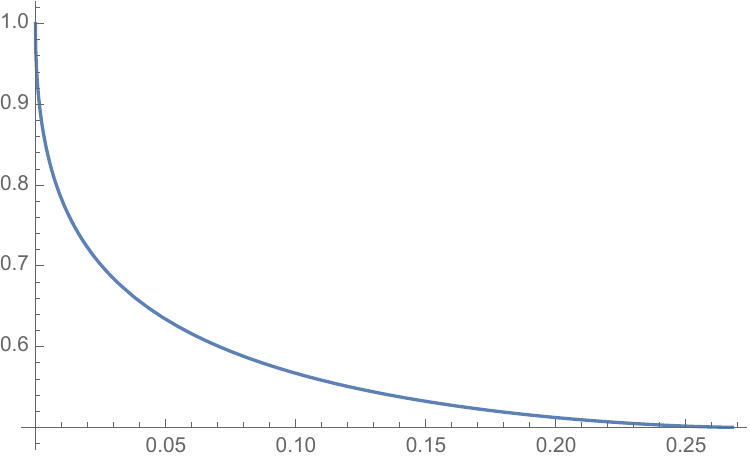}  \includegraphics[width=5.5cm]{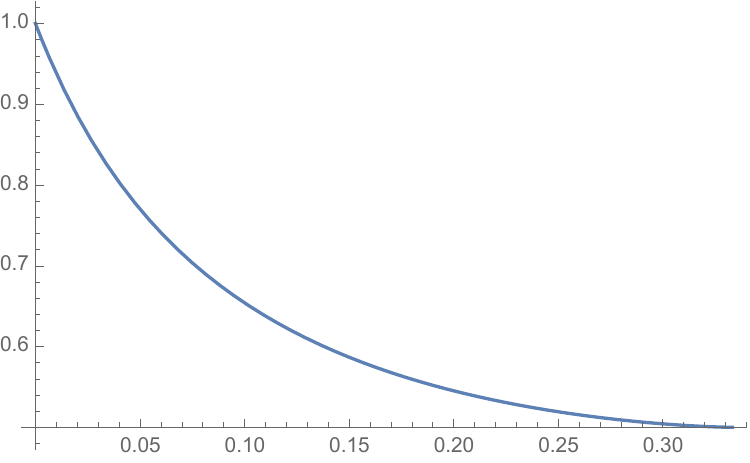}\includegraphics[width=5.5cm]{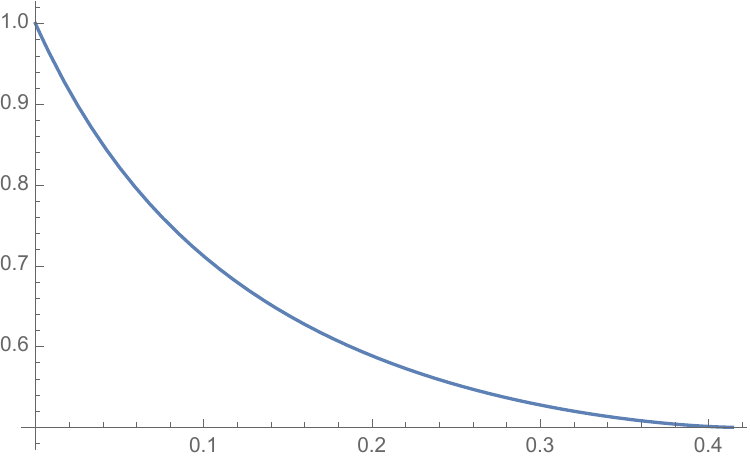}
 \caption{From left to right, the critical value for $q$ as a function of $ \alpha$ for car arrivals with respectively binary, geometric and Poisson distribution. Note that for the geometric 	and Poisson distribution, this function has a finite derivative as $ \alpha$ goes to $0$, whereas it is of the form $ 1- \mathrm{cst} \sqrt{a}$ for some $ \mathrm{cst}>0$ in the binary case}
 \end{center}
 \end{figure}
 \medskip 
\noindent\textit{Stable cases.} A last important example is when the car arrivals distribution is non-generic. More precisely, we assume here that there exists a polynomial $P$, three reals $C < 0$,   $ \rho > 1$ and $\alpha \in (2,3)$ such that 
$$ G(t) = P(t) + C \left(1- \frac{t}{ \rho} \right)^ \alpha,$$
and such that the parameter $ t_c$ in Theorem \ref{thm:locthereshold} is equal to $ \rho$.
Assume that $G$ is the generating function of a critical law, in the sense that the inequality on the right in Theorem \ref{thm:locthereshold} is an equality. 
Then, by Lemma \ref{lem:xboltzmann}, the generating function of $X$ the number of cars visiting the root is proportional to the function $ y \mapsto F(x_c, y)$ where $x_c$ is $ \hat{x} (t_c)$. Using the asymptotics of Chen \cite[Theorem 3]{chen2021enumeration}, this function has a radius of convergence $ t_c$. In particular, the tail of the law of $X$ decays exponentially fast. However, this is not the case for the law of the size $ | \mathcal{C} ( \varnothing)|$ of the cluster of the root. For example, by Lemma \ref{lem:xboltzmann} and conditionally on $X=1$, the generating function of the size of the cluster of the root is $ x \mapsto F ( x_c x, 0). $ In particular, using again the asymptotic provided by Chen in  \cite[Theorem 3]{chen2021enumeration}, we have the coefficient in front of $ x^n$ in $F ( x_c x, 0)$ is asymptotically of order
$$ [x^n]F ( x_c x, 0) \underset{n \to \infty}{\sim} C \cdot x_c^n \cdot x_c^{-n} n^{\frac{- 2 \alpha +1}{ \alpha-1}} = C \cdot n^{\frac{- 2 \alpha +1}{ \alpha-1}}, $$
for some constant $C> 0$ that depends on $ \alpha$. Note that this  exponent  $(- 2 \alpha +1)/ ( \alpha-1)$ is increasing from  $ -3$ to $ -5/2$ for $ \alpha \in (2,3)$.
\subsection{Perspectives}
In this work, we only consider the parking process on supercritical Bienaym\'e--Galton--Watson trees with a geometric offspring distribution. However, we believe that our technique can be extended to more general trees. The challenge in more general cases will be to enumerate the fully parked trees and include in this enumeration the free spots in the underlying tree. We will thus at least need an additional catalytic variable counting the number of adjacent vertices of the fully parked tree inside the underlying tree. We can also imagine to include in the enumeration the possibility for a vertex to carry more the one car. 
Moreover, we may make the car arrival law depend on the degree of the vertex (in the underlying tree). For example, by allowing car arrivals only on the leaves, this model is then very close to the Derrida-Retaux model \cite{chen2021derrida}. 
\bibliographystyle{siam}
\bibliography{/Users/contat/Dropbox/Articles/biblio.bib}

\begin{thebibliography}{10}

\bibitem{aldous2022parking}
{\sc D.~Aldous, A.~Contat, N.~Curien, and O.~H{\'e}nard}, {\em Parking on the
  infinite binary tree}, Probability Theory and Related Fields,  (2023),
  pp.~1--24.

\bibitem{bahl2021parking}
{\sc R.~Bahl, P.~Barnet, and M.~Junge}, {\em Parking on supercritical
  {G}alton-{W}atson trees}, ALEA, 18 (2021), pp.~1801--1815.

\bibitem{MBM:quadratic}
{\sc M.~Bousquet-M{\'e}lou and A.~Jehanne}, {\em Polynomial equations with one
  catalytic variable, algebraic series and map enumeration}, J. Combin. Theory
  Ser. B, 96 (2006), pp.~623 -- 672.

\bibitem{chen2021enumeration}
{\sc L.~Chen}, {\em Enumeration of fully parked trees}, arXiv preprint
  arXiv:2103.15770,  (2021).

\bibitem{chen2021derrida}
{\sc X.~Chen, V.~Dagard, B.~Derrida, Y.~Hu, M.~Lifshits, and Z.~Shi}, {\em The
  derrida--retaux conjecture on recursive models}, Annals Probab., 49 (2021),
  pp.~637--670.

\bibitem{contat2020sharpness}
{\sc A.~Contat}, {\em Sharpness of the phase transition for parking on random
  trees}, Random Structures \& Algorithms, 61 (2022), pp.~84--100.

\bibitem{contat2023parking}
\leavevmode\vrule height 2pt depth -1.6pt width 23pt, {\em Parking on trees
  with a (random) given degree sequence and the frozen configuration model},
  arXiv preprint arXiv:2312.04472,  (2023).

\bibitem{curien2022phase}
{\sc N.~Curien and O.~H{\'e}nard}, {\em The phase transition for parking on
  {G}alton--{W}atson trees}, Discrete Analysis,  (2022).

\bibitem{GP19}
{\sc C.~Goldschmidt and M.~Przykucki}, {\em Parking on a random tree},
  Combinatorics, Probability and Computing, 28 (2019), pp.~23--45.

\bibitem{konheim1966occupancy}
{\sc A.~G. Konheim and B.~Weiss}, {\em An occupancy discipline and
  applications}, SIAM Journal on Applied Mathematics, 14 (1966),
  pp.~1266--1274.

\bibitem{LaP16}
{\sc M.-L. Lackner and A.~Panholzer}, {\em Parking functions for mappings},
  Journal of Combinatorial Theory, Series A, 142 (2016), pp.~1 -- 28.

\end{thebibliography}
\end{document}